\title{Domination polynomial is unimodal for large graphs with a universal vertex}
\author{Shengtong Zhang}
\date{November 2021}
\address{Department of Mathematics, Massachusetts Institute of Technology, Cambridge, MA 02139}
\email{stzh1555@mit.edu}
\begin{document}
\maketitle
\begin{abstract}
    For a undirected simple graph $G$, let $d_i(G)$ be the number of $i$-element dominating vertex set of $G$. The \textbf{domination polynomial} of the graph $G$ is defined as
    $$D(G, x) = \sum_{i = 1}^n d_i(G)x^i.$$
    Alikhani and Peng conjectured that $D(G, x)$ is unimodal for any graph $G$. Answering a proposal of Beaton and Brown, we show that $D(G, x)$ is unimodal when $G$ has at least $2^{13}$ vertices and has a universal vertex, which is a vertex adjacent to any other vertex of $G$. We further determine possible locations of the mode. 
\end{abstract}
\section{Introduction}
All graphs considered are undirected and simple. Let $G = (V,E)$ be a graph on $n$ vertices. For a vertex set $S \subset V$, we define the \textbf{neighbor} $N(S)$ as the sets of vertices that are either in $S$ or is adjacent to a vertex in $S$. We say a vertex set $S$ \textbf{dominates} a vertex set $T$ if $T \subset N(S)$, and call $S$ \textbf{dominating} if $N(S) = V$. Let $d_i(G)$ be the number of $i$-element dominating vertex set of $G$. The \textbf{domination polynomial} of the graph $G$ is defined as
$$D(G, x) = \sum_{i = 1}^n d_i(G)x^i.$$
Our starting point is the following conjecture of Alikhani and Peng in \cite{AP14}. Recall a polynomial $f(x) = \sum_{i = 0}^n a_ix^i$ is \textbf{unimodal} if there exists a $k \in [0, n]$ such that $a_i \leq a_{i + 1}$ for any $i < k$, and $a_i \geq a_{i + 1}$ for any $i \geq k$. We call the largest such $k$ the \textbf{mode} of $f(x)$.
\begin{conjecture}
\label{conj:main}
For any graph $G$, the domination polynomial $D(G, x)$ is unimodal.
\end{conjecture}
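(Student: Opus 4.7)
The plan is to reduce the full conjecture to a strengthened form of the paper's main theorem via a coning construction. For any graph $G$ on $n$ vertices, let $G^+$ denote $G$ together with a new universal vertex $v^*$. Every subset of $V(G^+)$ containing $v^*$ dominates $G^+$, while a subset avoiding $v^*$ is dominating in $G^+$ iff it is dominating in $G$. This yields the identity
\[
    D(G^+, x) = x(1+x)^n + D(G, x),
\]
so $D(G, x) = D(G^+, x) - x(1+x)^n$, and the paper's main theorem ensures $D(G^+, x)$ is unimodal whenever $n + 1 \geq 2^{13}$.

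Since unimodality is not preserved under subtraction, the reduction forces a strengthening: I would aim to prove that for every graph $H$ on $N \geq 2^{13}$ vertices with a universal vertex $v^*$, the sequence $d_i(H) - \binom{N-1}{i-1}$ — the count of dominating sets of $H$ that avoid $v^*$ — is itself unimodal. Taking $H = G^+$ then recovers Conjecture~\ref{conj:main} for $n \geq 2^{13} - 1$. This refinement is natural because dominating sets of $G^+$ split cleanly by whether they contain $v^*$, so the paper's injection or analytic argument should, with care, apply to the $v^*$-avoiding half in isolation, particularly if the mode-location result promised in the abstract places the mode of $D(G^+,x)$ close to, but distinguishable from, the mode $(n+1)/2$ of $x(1+x)^n$.

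For the finitely many graphs with $n < 2^{13} - 1$, I would verify the conjecture by exhaustive search, using the multiplicative identity $D(G_1 \sqcup G_2, x) = D(G_1, x) \cdot D(G_2, x)$ to reduce to connected graphs, together with twin-vertex and low-degree case reductions. To keep products of base cases well-behaved under the disjoint-union identity, I would aim to verify \emph{log-concavity} (which is preserved under multiplication) rather than plain unimodality in this range.

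The hard part will be the strengthening itself. If the paper's proof proceeds by an injection from $i$-dominating sets to $(i+1)$-dominating sets that can be arranged to preserve membership of $v^*$, the strengthened statement follows more or less for free by restricting the injection to $v^*$-avoiding sets. But if the proof is analytic — bounding ratios $d_{i+1}(G^+)/d_i(G^+)$ — then subtracting the dominant term $\binom{N-1}{i-1}$ before bounding ratios requires genuinely new estimates, since $\binom{N-1}{i-1}$ can be comparable in magnitude to $d_i(G^+)$ near the mode and the arithmetic cancellation is delicate. In the worst case, a complete resolution of Conjecture~\ref{conj:main} may demand abandoning the coning reduction entirely and building a direct injection indexed by an intelligently chosen ``center'' vertex that need not be universal — an approach for which the paper's universal-vertex techniques would serve only as a template.
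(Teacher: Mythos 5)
You are attempting to prove \cref{conj:main} itself, which the paper does \emph{not} prove and explicitly describes as very difficult in general; the paper establishes only the special case of graphs with a universal vertex and $n \geq 2^{13}$ vertices (\cref{thm:main}). Your coning reduction does not change this, because it is circular. Since the new vertex $v^*$ is adjacent to everything, every nonempty subset of $V(G)$ automatically dominates $v^*$, so the dominating sets of $G^+$ that avoid $v^*$ are \emph{exactly} the dominating sets of $G$. Your identity $D(G^+,x) = x(1+x)^n + D(G,x)$ is correct, but for that very reason your ``strengthened form'' --- unimodality of the sequence $d_i(G^+) - \binom{n}{i-1}$ --- is not a refinement of \cref{thm:main} applied to $G^+$; it is verbatim \cref{conj:main} for $G$. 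The construction adds the summand $x(1+x)^n$ and then asks you to subtract it back off, so no content of the universal-vertex theorem survives the reduction: you have reduced the conjecture to itself.

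The hedge in your last paragraph resolves unfavorably: the paper's proof is not an injection that could be restricted to $v^*$-avoiding sets. It runs through the double-counting identities of \cref{lem:eq1} and \cref{lem:eq2} together with the ratio monotonicity of \cref{lem:BB}, and the hypotheses of \cref{thm:mode} and \cref{thm:unimodality} are verified in the proof of \cref{thm:main} only by means of the lower bound $d_k(G^+) \geq \binom{n}{k-1}$ --- which counts precisely the dominating sets containing $v^*$, i.e.\ the term you propose to delete. The $v^*$-avoiding half admits no such lower bound: if $G$ is the empty graph (so $G^+$ is a star), the subtracted sequence is $0,\dots,0,1$, and no hypothesis of the form $d_k(G) > (\text{error terms})$ can hold near $k \approx n/2$, so the machinery gives nothing even in this trivial case. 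Your fallback for $n < 2^{13}$ is also not viable: there are more than $2^{\binom{4000}{2}}$ graphs in that range even after restricting to connected graphs, far beyond any exhaustive search, and the log-concavity you would need for the disjoint-union step is a strictly stronger property than unimodality that is itself unestablished --- if it fails for a single small factor, that reduction collapses too. The genuine gap, then, is the central one: no idea in the proposal addresses graphs without a universal vertex, which is exactly where \cref{conj:main} remains open.
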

This conjecture seems to be very difficult in general. See \cite{B17} for a survey of recent progress toward this conjecture. The main motivation of our study is a proposal  Beaton and Brown made in \cite{BB20}. They considered graphs with \textbf{universal vertices}, where a vertex $v$ is called universal if $N(\{v\}) = V$. Beaton and Brown computed that all graphs with at most $10$ vertices and at least one universal vertex have unimodal domination polynomial, with mode either $\floor{\frac{n}{2}}$ or $\floor{\frac{n + 1}{2}}$. The proposal was considered by Burcroff and O'Brien in \cite{BB21}, where they proved that a graph $G$ on $n$ vertices and $m$ universal vertices satisfy $d_i(G) \geq d_{i + 1}(G)$ for any
$$i \geq \left(\frac{1}{2} + \frac{1}{2^{m + 1}}\right)n.$$
In this paper we provide an affirmative answer to Beaton and Brown's proposal, showing both unimodality of the domination polynomial and where the mode is, provided that the number of vertices is sufficiently large.
\begin{theorem}
\label{thm:main}
For $n \geq 2^{13}$, if $G$ is a graph on $n$ vertices with at least one universal vertex, then $D(G, x)$ is unimodal. Furthermore, the mode of $D(G,x)$ lies in the interval $[n/2, n/2 + \log_2 n + 2]$. 
\end{theorem}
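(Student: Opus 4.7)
Let $v_0$ denote a universal vertex of $G$ and set $H := G - v_0$. The plan is to exploit the clean decomposition
\[
d_i(G) = \binom{n}{i} - c_i,
\]
where $c_i$ is the number of non-dominating $i$-subsets of $V(H)$ in $H$. Indeed, since $v_0$ is universal, any non-dominating subset of $V(G)$ automatically avoids $v_0$, and then dominates $V(G)$ in $G$ if and only if it dominates $V(H)$ in $H$. Equivalently, $d_i(G) = \binom{n-1}{i-1} + b_i$ with $b_i = \binom{n-1}{i} - c_i$ counting dominating $i$-subsets of $V(H)$ in $H$.

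For the lower bound on the mode, I would use the normalized-matching inequality $(i+1)b_{i+1} \geq (n-1-i)b_i$ for the upward-closed family of dominating sets in $H$. This implies $b_{i+1} \geq b_i$ whenever $i \leq \lfloor (n-2)/2 \rfloor$. Combined with $\binom{n-1}{i-1} < \binom{n-1}{i}$ for $i < n/2$, this forces $d_{i+1} > d_i$ throughout $i < n/2$, so the mode of $D(G, x)$ is at least $\lceil n/2 \rceil$.

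For the upper bound on the mode, the identity
\[
(i+1)d_{i+1} - (n-i)d_i = R_i, \qquad R_i = (n-i)c_i - (i+1)c_{i+1},
\]
rearranges to $d_i \geq d_{i+1} \iff R_i \leq (2i+1-n)d_i$. Setting $p_i := c_i/\binom{n-1}{i}$ (a non-increasing sequence in $[0,1]$ by LYM for the downward-closed non-dominating family), I would rewrite $R_i$ in the form
\[
R_i = \binom{n-1}{i} \bigl[p_i + (n-1-i)(p_i - p_{i+1})\bigr],
\]
and similarly expand $d_i = \binom{n-1}{i-1} + (1 - p_i)\binom{n-1}{i}$. The target inequality becomes an affine inequality in the pair $(p_i, p_{i+1})$, whose leading coefficient in $p_i - p_{i+1}$ is of order $n/2$; this forces the analysis to control the \emph{local drop} $p_i - p_{i+1}$ by roughly $O(\log_2 n/n)$ within the critical window $i \in [n/2, n/2 + \log_2 n + 2]$.

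The main obstacle will be establishing such a bound on the drop $p_i - p_{i+1}$. Although $\sum_i (p_i - p_{i+1}) = 1$ globally (so on average the drops are $O(1/n)$), individual drops can in principle be large. The plan is to invoke the inclusion-exclusion formula
\[
c_i = \sum_{\emptyset \neq U \subseteq V(H)} (-1)^{|U|+1}\binom{n-1-|N_H[U]|}{i},
\]
which expresses $p_i - p_{i+1}$ as a signed sum whose individual terms $\binom{n-1-|N_H[U]|}{i}/\binom{n-1}{i}$ decay geometrically like $2^{-|N_H[U]|}$ for $i$ near $n/2$. A dyadic split of vertices of $H$ by closed-neighborhood size combined with truncation of the high-closed-degree tail (which is negligible due to the geometric decay) should then yield the desired bound on $p_i - p_{i+1}$ throughout the critical window; the $\log_2 n + 2$ slack in the theorem statement emerges from the logarithmic number of dyadic buckets of low-degree vertices whose contributions must be absorbed. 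Once the inequality $R_i \leq (2i+1-n)d_i$ is secured for all $i \geq n/2 + \log_2 n + 2$, unimodality and the upper bound on the mode follow immediately, with the transitional window $[n/2, n/2 + \log_2 n + 2]$ handled by the same monotonicity considerations.
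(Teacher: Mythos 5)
Your decomposition $d_i(G) = \binom{n}{i} - c_i$, the identity $(i+1)d_{i+1} - (n-i)d_i = (n-i)c_i - (i+1)c_{i+1} =: R_i$, and the reformulation $d_i \geq d_{i+1} \iff R_i \leq (2i+1-n)d_i$ are all correct, and $R_i$ is in fact exactly the quantity the paper controls: by the paper's Lemma 2.2 it equals $\sum_{T \neq \emptyset} E_i^T D(T)$, a sum of \emph{nonnegative} terms. The lower bound on the mode is also fine. However, the proposal has two genuine gaps. The first is the key bound on the drop $p_i - p_{i+1}$ (equivalently, on $R_i$). The inclusion--exclusion identity $c_i = \sum_{U \neq \emptyset} (-1)^{|U|+1}\binom{n-1-|N_H[U]|}{i}$ is an alternating sum over all $2^{n-1}-1$ nonempty subsets, and although each term has magnitude roughly $2^{-|N_H[U]|}\binom{n-1}{i}$, the number of subsets $U$ with $|N_H[U]| = m$ can be of order $2^{m}$ (for instance, all subsets of the closed neighborhood of one vertex of degree about $m$). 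The geometric decay of individual terms is therefore exactly offset by the number of terms, so "truncation of the high-closed-degree tail" bounded in absolute value is \emph{not} negligible, and the cancellation you would need is not captured by a dyadic split. Even a Bonferroni truncation at the second level fails for general $H$, because pairs $v,w$ can satisfy $|N_H[\{v,w\}]|$ barely larger than $|N_H[v]|$ (the paper's tightness construction needs girth $5$ precisely to rule this out). The paper sidesteps signs entirely: it uses the exact count $\sum_{T \ni v} E_i^T = \binom{n-N(v)}{i}$ together with a degree threshold $\alpha \approx 2\log_2 n$ to bound $R_i$ by $n^2\binom{n-\alpha-1}{i} + \alpha\binom{n-h}{i}$, and this positivity is what makes the estimate work.

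The second gap is the claim that unimodality "follows immediately, with the transitional window handled by the same monotonicity considerations." It does not: knowing that $d_i$ increases for $i < n/2$ and decreases for $i \geq n/2 + \log_2 n + 2$ leaves open the possibility that the sequence oscillates inside the window $[n/2,\, n/2 + \log_2 n + 2]$, which is precisely the difficulty. The paper needs a separate and harder concavity theorem (Theorem 1.5), proved by a second double count involving the quantity $D'(S)$, to show $2d_{\ell+1} > d_\ell + d_{\ell+2}$ throughout $[n/2,\, n/2+\sqrt{n}/4]$; strict concavity on the window combined with monotonicity outside it is what forces unimodality. Your proposal contains no substitute for this step, and no purely monotonicity-based argument can supply it.
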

We also provide a construction showing the bound on the mode is tight up to a constant factor.
\begin{theorem}
\label{thm:cons}
For $n \geq 2^{20}$, there exists a graph $G$ on $n$ vertices with at least one universal vertex such that the mode of $D(G, x)$ at least $n/2 + \frac{1}{999}\log_2 n$. 
\end{theorem}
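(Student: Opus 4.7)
The plan is to construct $G$ from a universal vertex $u$ together with a disjoint union of $p$ cliques $B_1, \ldots, B_p$ of size approximately $q = \lfloor \log_2 n - \log_2 \log_2 n \rfloor$ on $V \setminus \{u\}$ (adjusting block sizes by $\pm 1$ to handle divisibility), with no edges between different blocks. Since each non-universal vertex's only neighbors outside its block are its blockmates and $u$, a set $S$ dominates $G$ if and only if $u \in S$ or $S$ meets every block. The domination polynomial is then
$$D(G, x) = x(1+x)^{n-1} + H(x), \qquad H(x) = \prod_{i=1}^p \bigl((1+x)^{|B_i|} - 1\bigr).$$

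The heart of the argument is a two-Gaussian comparison. The first summand $x(1+x)^{n-1}$ has binomial coefficients concentrated at $n/2$ with peak height $A \sim 2^{n-1}/\sqrt n$ and variance $\approx (n-1)/4$. Normalizing $H(x)$ by $H(1) = \prod_i(2^{|B_i|}-1)$ presents its coefficients as the probability mass function of $Z = |S|$ for $S$ uniform over block-hitting subsets of $V \setminus \{u\}$; by independence across blocks, $Z$ has mean $(n-1)/2 + \delta$ with $\delta := (n-1)/(2(2^q - 1)) = \Theta(\log_2 n)$ and variance $\approx (n-1)/4$, matching the first summand. The peak ratio is $B/A = (1 - 2^{-q})^p \sim \exp(-p/2^q)$, a fixed constant in $(0, 1)$ because $p/2^q = \Theta(1)$ by our choice of $q$. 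Treating the two coefficient sequences as Gaussians with equal variance $\sigma^2$ and well-separated modes (and noting $\delta^2/\sigma^2 = O((\log n)^2/n) = o(1)$), setting the derivative of the mixture to zero yields the mode at approximately $n/2 + \delta \cdot B/(A+B)$, a shift of $\Theta(\log_2 n)$; optimizing $q$ I expect a shift of roughly $0.13 \log_2 n$, far exceeding $(\log_2 n)/999$.

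To turn this heuristic into a proof, I would invoke Theorem~\ref{thm:main} to conclude $D(G, x)$ is unimodal for $n \geq 2^{13}$, reducing the problem to verifying $d_{k^*}(G) \geq d_{k^*-1}(G)$ at $k^* = \lfloor n/2 + (\log_2 n)/999 \rfloor$. Writing $d_k(G) = \binom{n-1}{k-1} + h_k$ where $h_k = [x^k] H(x)$, this inequality becomes
$$h_{k^*} - h_{k^*-1} \geq \binom{n-1}{k^*-2} - \binom{n-1}{k^*-1}.$$
The right side is approximately $(4/999)(\log n)/n \cdot \binom{n-1}{\lfloor n/2\rfloor}$, estimated via direct binomial manipulation. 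The left side requires a local central limit theorem applied to the i.i.d.\ sum $Z$: since the $Z_i$ are bounded and non-degenerate lattice variables, the standard LCLT gives $h_k/H(1) = (1 + O((\log n)^{3/2}/\sqrt n)) \phi_{\mu_Z, \sigma_Z^2}(k)$ uniformly on $|k - \mu_Z| = O(\sqrt{n \log n})$, where $\phi$ denotes the Gaussian density. Substituting the Gaussian approximation into the desired inequality and checking the algebra finishes the argument.

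The main obstacle I anticipate is tracking the constants in the LCLT error carefully enough to certify the explicit bound $(\log_2 n)/999$. Fortunately, the gap between the heuristic shift ($\sim 0.13 \log_2 n$) and the claimed target is enormous, and the hypothesis $n \geq 2^{20}$ makes the LCLT error $O((\log n)^{3/2}/\sqrt n)$ comfortably small, so the routine but delicate error accounting should go through with room to spare.
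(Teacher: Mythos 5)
Your construction (a universal vertex plus a disjoint union of cliques of size $q\approx\log_2 n-\log_2\log_2 n$) is genuinely different from the paper's, which instead attaches a universal vertex to a $d$-regular graph of girth at least $5$ with $d\approx\log_2 n$ and argues through the identity of \cref{lem:eq1}, lower-bounding $\sum_T E_k^T D(T)$ by the contribution of singletons $T=\{v\}$. Your decomposition $D(G,x)=x(1+x)^{n-1}+\prod_i((1+x)^{|B_i|}-1)$ is correct (with $u\notin S$, a set dominates iff it meets every block), the two-Gaussian heuristic is sound, and your computed shift $\delta\cdot B/(A+B)\approx 0.13\log_2 n$ checks out, with enormous slack over the target $\frac{1}{999}\log_2 n$. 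Reducing to $d_{k^*}\geq d_{k^*-1}$ via the unimodality supplied by \cref{thm:main} is also fine.

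However, there is a genuine gap in the quantitative step, and it is not merely a matter of "tracking constants." You need a lower bound on $h_{k^*}-h_{k^*-1}$, which by your own computation is of size $\Theta(\log n/n)\cdot h_{k^*}$. A local central limit theorem of the form $h_k/H(1)=(1+O((\log n)^{3/2}/\sqrt n))\,\phi_{\mu_Z,\sigma_Z^2}(k)$ gives each of $h_{k^*}$ and $h_{k^*-1}$ only up to a multiplicative error of order $(\log n)^{3/2}/\sqrt n$, which exceeds the relative size of the difference you are trying to detect by a factor of roughly $\sqrt n/(\log n)^{1/2}$. Substituting this approximation therefore cannot certify even the sign of $h_{k^*}-h_{k^*-1}$, let alone the required lower bound. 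To repair this you need a statement about first differences rather than point masses: either an Edgeworth-corrected LCLT sharp enough that the residual error is $o(\log n/n^{3/2})$ (and you must then also control the contribution of the skewness correction to the difference), or, more simply, the exact inclusion--exclusion identity $h_k=\sum_{j\ge 0}(-1)^j\binom{p}{j}\binom{n-1-jq}{k}$ available for equal block sizes, whose first differences can be estimated term by term with elementary binomial manipulations. With such a tool the argument should close, but as written the key inequality is not established.
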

The main tools we use are two novel theorems about the mode and concavity of the domination polynomial. In \cite[Lemma 3.1]{BB20}, Beaton and Brown defined the quantity $r_k(G) = d_k(G) / \binom{n}{k}$, and showed that if $r_k(G) \geq (n - k) / (k + 1)$ then the mode of $D(G, x)$ is at most $k$. Our theorems strengthen this observation even when there is no universal vertex.
\begin{theorem}
\label{thm:mode}
Let $G$ be any graph on $n$ vertices with $h$ universal vertices. For any $k \in [n/2, n]$, if there exists a positive integer $\alpha$ such that
$$d_k(G) > \frac{1}{2k + 1 - n} \cdot \left(n^2 \cdot \binom{n - \alpha - 1}{k} + \alpha \cdot \binom{n - h}{k}\right)$$
then the sequence $d_k(G), d_{k + 1}(G), \cdots, $ is decreasing, so the mode of the domination polynomial of $G$ is at most $k$.
\end{theorem}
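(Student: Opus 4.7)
The plan is a double-counting argument. Consider pairs $(T, v)$ with $|T| = k$, $v \notin T$, and $T \cup \{v\}$ dominating. Counting by $S = T \cup \{v\}$ gives $(k+1) d_{k+1}(G)$ pairs. Counting by $T$ instead: if $T$ already dominates, all $n - k$ choices of $v$ work, contributing $(n-k) d_k(G)$; otherwise $v$ must dominate the deficit $B(T) = V \setminus N(T)$. Writing $E$ for the contribution from non-dominating $T$, we obtain $(k+1) d_{k+1}(G) = (n-k) d_k(G) + E$, so $d_{k+1}(G) \leq d_k(G)$ is equivalent to $E \leq (2k+1-n) d_k(G)$. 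It therefore suffices to prove
\[
E \leq n^2 \binom{n - \alpha - 1}{k} + \alpha \binom{n - h}{k}.
\]

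To bound $E = \sum_{T \text{ non-dom}, |T| = k} |R(T)|$ with $R(T) = \{v \notin T : T \cup \{v\} \text{ dominating}\}$, observe that every $v \in R(T)$ dominates $B(T)$, so $R(T) \subseteq N[u]$ and $|R(T)| \leq 1 + \deg(u)$ for any $u \in B(T)$. I split non-dominating $T$'s based on whether $B(T)$ contains a \emph{high-degree} vertex (degree $\geq \alpha$) or consists entirely of \emph{low-degree} vertices. In the high-degree case I fix a witness $u \in B(T)$, which forces $T \subseteq V \setminus N[u]$; the crude bound $|R(T)| \leq n$ together with at most $n$ choices of $u$, each yielding at most $\binom{n-1-\deg(u)}{k} \leq \binom{n-\alpha-1}{k}$ sets $T$, contributes at most $n^2 \binom{n-\alpha-1}{k}$. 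In the low-degree case, any $u \in B(T)$ satisfies $|R(T)| \leq 1 + \deg(u) \leq \alpha$; the essential observation is that a non-dominating $T$ cannot contain any universal vertex (which would single-handedly dominate $V$), so $T \subseteq V \setminus U$ ranges over at most $\binom{n-h}{k}$ sets, giving the contribution $\alpha \binom{n-h}{k}$.

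This yields $d_{k+1}(G) < d_k(G)$. To extend to the full decreasing tail, I would propagate the hypothesis from $k$ to $k+1$ by induction. Since dominating sets form an upward-closed family, the local LYM inequality $d_{k+1}(G) \geq \frac{n-k}{k+1} d_k(G)$ holds; together with the binomial recursions $\binom{n-\alpha-1}{k+1} = \frac{n-\alpha-1-k}{k+1} \binom{n-\alpha-1}{k}$ and the analogue for $\binom{n-h}{k+1}$, the inductive step reduces to $(n-k)(2k+3-n) \geq (2k+1-n)(n-\alpha-1-k)$ and a similar inequality for $h$, both of which follow from the identity
\[
(n-k)(2k+3-n) - (2k+1-n)(n-k-c) = 2(n-k) + (2k+1-n) c
\]
with $c = \alpha+1$ or $c = h$ respectively. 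The main obstacle is designing the case split so the two terms of the bound appear cleanly; the key insight is that the low-degree case plays a dual role, both bounding $|R(T)| \leq \alpha$ and, via the universal-vertex observation, restricting the ambient set of $T$ to $V \setminus U$ with only $\binom{n-h}{k}$ choices.
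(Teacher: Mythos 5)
Your proof is correct and follows essentially the same route as the paper: the same double count of pairs $(T,v)$ with $T\cup\{v\}$ dominating, the same split of non-dominating $T$ according to whether the undominated set contains a vertex of degree at least $\alpha$, and the same two resulting bounds $n^2\binom{n-\alpha-1}{k}$ and $\alpha\binom{n-h}{k}$. Your tail propagation by explicit induction is just an unwound version of the paper's argument that $d_\ell(G)/\binom{n}{\ell}$ is non-decreasing while the normalized right-hand side decreases, both resting on the same local LYM inequality.
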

\begin{theorem}
\label{thm:unimodality}
Let $G$ be any graph on $n$ vertices with $h$ universal vertices. For any $k \in \left[n/2, n/2 + \sqrt{n} / 4\right]$, if there exists a positive integer $\alpha$ such that
$$d_k(G) > \frac{1}{k + 1} \cdot \left(n^3 \cdot \binom{n - \alpha - 1}{k} + 2\alpha^2 \cdot \binom{n - h}{k}\right)$$
then we have $2d_{\ell + 1}(G) > d_{\ell}(G) + d_{\ell + 2}(G)$ for any $\ell \in \left[k, n/2 + \sqrt{n} / 4\right]$.
\end{theorem}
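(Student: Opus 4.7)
The plan follows the template of Theorem \ref{thm:mode}, pushed one order further to a concavity statement. Let $g_m(G)$ denote the number of pairs $(T, v)$ such that $T$ is a dominating set of size $m$, $v \in T$, and $T \setminus \{v\}$ is no longer dominating. Double-counting inclusions $S \subset T$ with $|S|=m$, $|T|=m+1$, and both dominating, yields the identity
\[
(m+1) d_{m+1}(G) = (n-m) d_m(G) + g_{m+1}(G).
\]
Applying this at $m = \ell$ and $m = \ell+1$, and substituting into $d_\ell + d_{\ell+2} - 2d_{\ell+1}$, shows that the target concavity $2d_{\ell+1} > d_\ell + d_{\ell+2}$ is equivalent to
\[
d_{\ell+1} \cdot C_\ell > \frac{g_{\ell+2}}{\ell+2} - \frac{g_{\ell+1}}{n-\ell}, \quad C_\ell := 2 - \frac{\ell+1}{n-\ell} - \frac{n-\ell-1}{\ell+2}.
\]
A direct calculation gives $C_\ell = \frac{(5n - 8\ell - 2) - (2\ell-n)^2}{(n-\ell)(\ell+2)}$; for $\ell \in [n/2, n/2 + \sqrt{n}/4]$ the numerator remains linear in $n$ while the denominator is $\Theta(n^2)$, so $C_\ell = \Omega(1/n)$.

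The crux of the proof is an upper bound on $g_{\ell+2}$. A vertex $v \in T$ is essential exactly when some private witness $u$ with $v \in N(\{u\})$ satisfies $N(\{u\}) \cap T = \{v\}$; summing over such pairs $(v,u)$ and splitting them according to the degree of $u$ (with universal vertices contributing nothing) should yield a bound of the form
\[
g_m(G) \leq n^2 \binom{n-\alpha-1}{m-1} + \alpha \binom{n-h}{m-1},
\]
which is precisely the estimate that drives Theorem \ref{thm:mode}. Discarding the negative term $g_{\ell+1}/(n-\ell)$, it then suffices to verify $d_{\ell+1} \cdot C_\ell > g_{\ell+2}/(\ell+2)$. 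Because $C_\ell$ is only of order $1/n$, we need $d_{\ell+1}$ to beat the analogous Theorem \ref{thm:mode} threshold by an additional factor of order $n$, which is exactly why the hypothesis here features $n^3$ rather than $n^2$.

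Finally, the hypothesis at level $k$ must be propagated to a lower bound on $d_{\ell+1}$ at each $\ell \in [k, n/2 + \sqrt{n}/4]$. From $(m+1) d_{m+1} \geq (n-m) d_m$ one checks that both ratios $d_m / \binom{n-\alpha-1}{m}$ and $d_m / \binom{n-h}{m}$ are non-decreasing in $m$ throughout this range, so the hypothesis at $k$ carries over (in each piece separately) to a lower bound on $d_{\ell+1}$. Plugging this into the reduction above closes the argument, with the $2\alpha^2$ coefficient absorbing the factor $\alpha$ from the $g$-bound together with a constant loss from separating the two hypothesis terms during propagation. The main technical obstacle I anticipate is establishing the $g$-bound with sharp enough constants; once that combinatorial estimate is in place, the remaining manipulations are essentially routine algebra.
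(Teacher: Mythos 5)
Your set-up is sound as far as it goes: the identity $(m+1)d_{m+1}(G) = (n-m)d_m(G) + g_{m+1}(G)$ is exactly \cref{lem:eq1} of the paper (with $g_{m+1}(G) = \sum_{T\neq\emptyset} E_m^T D(T)$), the algebraic reduction to $d_{\ell+1}C_\ell > \tfrac{g_{\ell+2}}{\ell+2} - \tfrac{g_{\ell+1}}{n-\ell}$ is correct, the bound $g_m \leq n^2\binom{n-\alpha-1}{m-1} + \alpha\binom{n-h}{m-1}$ is precisely what the proof of \cref{thm:mode} establishes, and the final propagation via \cref{lem:BB} matches the paper. The fatal step is ``discarding the negative term $g_{\ell+1}/(n-\ell)$.'' The quantities $g_{\ell+1}$ and $g_{\ell+2}$ are individually far too large for that: since $C_\ell(\ell+2) \le \tfrac{5n-8\ell-2}{n-\ell} \approx 3/2$, your reduced goal forces $d_{\ell+1} \gtrsim \tfrac{2}{3}\alpha\binom{n-h}{\ell+1}$, which for $h=1$ already exceeds the trivial bound $d_{\ell+1}\le\binom{n}{\ell+1}$ once $\alpha$ is larger than a small constant --- and the intended application (\cref{thm:main}) takes $\alpha\approx 3\log_2 n$. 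This is not merely a slack in your $g$-bound: for the graphs of \cref{thm:cons} (universal vertex plus a $(\log_2 n)$-regular girth-$5$ graph) one genuinely has $\tfrac{g_{\ell+2}}{\ell+2} \gtrsim \tfrac{\log_2 n}{16n}\binom{n}{\ell+1} > \tfrac{4}{n}\binom{n}{\ell+1} \geq d_{\ell+1}C_\ell$ for large $n$, yet these graphs satisfy the hypothesis and the conclusion of the theorem; concavity holds there only because of the term you discarded. Your own accounting should have flagged this: scaling the \cref{thm:mode} threshold by $n$ would require the second coefficient to be about $n\alpha$, whereas the hypothesis only offers $2\alpha^2$.

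The missing idea is to exploit the cancellation between $g_{\ell+1}$ and $g_{\ell+2}$ rather than treating them separately. The paper writes $\tfrac{g_{\ell+1}}{\ell+1} - \tfrac{g_{\ell+2}}{\ell+2} \geq \tfrac{1}{\ell+1}\sum_T (E_\ell^T - E_{\ell+1}^T)D(T)$ and proves a second double-counting identity, for each fixed nonempty $T$, expressing $(\ell+1)(E_{\ell+1}^T - E_\ell^T)$ in terms of the families $E_\ell^S$ with $S\supsetneq T$. After resumming over $S$ and evaluating $\sum_{T\subsetneq S}D(S{:}T)D(T) = D(S)N(S) - D(S)^2 + D'(S)$, the whole first-order difference is bounded below by $-\tfrac{1}{(\ell+1)^2}\sum_S E_\ell^S D'(S)$, where $D'(S)$ counts pairs $(u_1,u_2)$ neither of which dominates $S$ alone but which dominate it jointly. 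For $S$ contained in the low-degree vertices one has $D'(S)\leq 2\alpha^2$ --- the true source of the $2\alpha^2$ in the hypothesis, not an absorbed constant --- and the extra factor $\tfrac{1}{\ell+1}$ supplies exactly the order-$n/\alpha$ saving your route lacks. Without this second-order identity (or an equivalent way to control $g_{\ell+2}-g_{\ell+1}$ directly), the argument does not close.
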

The paper is organized as follows. In \cref{sec:mode-concave} we prove our mode and concavity theorems, and in \cref{sec:universal} we establish our theorems about graphs with a universal vertex.
\section{Mode and Concavity Theorems}
\label{sec:mode-concave}
Throughout this section, we fix a graph $G = (V, E)$ on $n$ vertices. For convenience, we abuse notation and let $N(S)$ denote the size of the neighbor of $S$, and let $N(v) = N(\{v\})$ for each vertex $v$. For vertex set $S \subset V$, let $D(S)$ be the number of vertices $v$ such that $\{v\}$ dominates $S$, and let $E_k^S$ be the family of $k$-element subsets of $V$ that dominates the vertex set $V - S$ but does not dominate any vertex in $S$. We also let $E_k^S$ denote the cardinality of this family. For example, $E_k^{\emptyset} = d_k(G)$ while $E_k^V = 0$ for any $k > 0$.

We first recall a handy lemma derived by Beaton and Brown.
\begin{lemma}[\cite{BB20}, Section 3] For any $k \in [0,n - 1]$, we have
\label{lem:BB}
$$\frac{d_{k + 1}(G)}{\binom{n}{k + 1}} \geq \frac{d_{k}(G)}{\binom{n}{k}}.$$
\end{lemma}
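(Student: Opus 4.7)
The plan is to prove this by a standard double-counting argument on pairs $(S, T)$ with $S \subset T \subset V$, $|S| = k$, $|T| = k + 1$, and both $S$ and $T$ dominating sets of $G$. The key observation powering the argument is that any superset of a dominating set is itself dominating, since $N(S) \subset N(T)$ whenever $S \subset T$.

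Counting such pairs by first fixing $S$: every $k$-element dominating set $S$ extends to a $(k+1)$-element dominating set in exactly $n - k$ ways, since every choice of an additional vertex preserves the dominating property. This yields exactly $(n - k) \cdot d_k(G)$ pairs. Counting by first fixing $T$: a $(k+1)$-element dominating set $T$ has exactly $k + 1$ subsets of size $k$, at most all of which are dominating, so the number of pairs is at most $(k + 1) \cdot d_{k + 1}(G)$. Combining yields $(n - k) d_k(G) \leq (k + 1) d_{k + 1}(G)$, and dividing by $\binom{n}{k}(n-k)/(k+1) = \binom{n}{k+1}$ gives the desired inequality. There is no real obstacle here; the lemma is a routine consequence of the monotonicity of the dominating property under superset.
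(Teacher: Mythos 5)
Your double-counting of pairs $(S,T)$ is correct and is the standard proof of this inequality: the count $(n-k)d_k(G)$ from the $S$-side, the upper bound $(k+1)d_{k+1}(G)$ from the $T$-side, and the division by $(k+1)\binom{n}{k+1}=(n-k)\binom{n}{k}$ all check out. The paper only cites this lemma from Beaton--Brown rather than proving it, but the identical count of pairs appears in its proof of \cref{lem:eq1}, whose identity $(k+1)d_{k+1}(G) = (n-k)d_k(G) + \sum_{T\neq\emptyset} E_k^T D(T)$ recovers your inequality upon dropping the nonnegative sum, so your argument matches the intended one.
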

Our first new observation is the following lemma. The lemma is motivated by arguments in \cite{BB19}, though they count from the ``$k + 1$-side" while we count from the ``$k$-side".
\begin{lemma}
\label{lem:eq1}
We have the identity
$$(k + 1)(d_{k + 1}(G) - d_k(G)) = \sum_{T \subset V, T\neq \emptyset} E_k^T D(T) - (2k + 1 - n)d_k(G).$$
\end{lemma}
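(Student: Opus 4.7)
The plan is to prove the identity by double counting pairs $(S,v)$ where $S$ is a $k$-subset of $V$ and $v \in V$ is a vertex satisfying the condition that $\{v\}$ dominates $V \setminus N(S)$. Crucially, I would not restrict $v$ to lie inside or outside $S$; both possibilities are allowed.

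First I would count by grouping over $S$. For every $k$-subset $S$, the set $T := V \setminus N(S)$ is the unique $T \subseteq V$ with $S \in E_k^T$; the number of $v \in V$ with $\{v\}$ dominating this $T$ is $D(T)$ by definition. Summing over $S$ and regrouping by $T$ gives $\sum_{T \subseteq V} E_k^T \, D(T)$, including the contribution $T = \emptyset$.

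Next I would count the same pairs by $v$. The observation is that $\{v\}$ dominates $V \setminus N(S)$ if and only if $V \setminus N(v) \subseteq N(S)$, i.e., $N(S) \cup N(v) = V$, i.e., $S \cup \{v\}$ dominates $G$. Splitting on whether $v \in S$: if $v \in S$, the condition simply says $S$ itself is dominating, contributing $k \, d_k(G)$; if $v \notin S$, the condition says $S \cup \{v\}$ is a dominating set of size $k+1$, which is counted $k+1$ times across choices of $v$, contributing $(k+1) d_{k+1}(G)$. Equating the two counts yields
\[
\sum_{T \subseteq V} E_k^T \, D(T) \;=\; (k+1) d_{k+1}(G) + k \, d_k(G).
\]

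Finally I would isolate the $T = \emptyset$ summand. Here $E_k^\emptyset = d_k(G)$ and $D(\emptyset) = n$ since every vertex trivially dominates the empty set, so this term equals $n \, d_k(G)$. Subtracting and rearranging yields exactly
\[
(k+1)\bigl(d_{k+1}(G) - d_k(G)\bigr) \;=\; \sum_{T \subseteq V,\, T \neq \emptyset} E_k^T \, D(T) \;-\; (2k + 1 - n) d_k(G),
\]
as claimed. The argument is essentially bookkeeping, so I do not anticipate a real obstacle; the one point that deserves care is treating $T = \emptyset$ correctly and using the reformulation "$\{v\}$ dominates $V \setminus N(S)$ iff $S \cup \{v\}$ dominates $G$" without accidentally excluding the case $v \in S$, which is precisely what produces the asymmetric $k \, d_k(G)$ term that eventually gives the $(2k+1-n)$ coefficient.
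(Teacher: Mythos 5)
Your proof is correct and is essentially the same double count as the paper's: the paper counts pairs $(U,W)$ with $|U|=k$, $W=U\cup\{v\}$ a dominating $(k+1)$-set, while your pairs $(S,v)$ with $S\cup\{v\}$ dominating differ only in that you also admit $v\in S$ and fold the dominating $k$-sets into the $T=\emptyset$ term rather than handling them separately, which yields the same identity after moving $k\,d_k(G)$ across. The one bookkeeping point you flag — $E_k^\emptyset = d_k(G)$ and $D(\emptyset)=n$ — is handled correctly, so there is no gap.
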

\begin{proof}
We apply double counting to the set 
$$H = \{(U, W): U \subset W, \abs{U} = k, \abs{W} = k + 1, W \text{ dominates }V\}.$$ On one hand, each dominating set $W$ of size $k + 1$ corresponds to exactly $k + 1$ choices of $(U,W) \in H$, so we have $\abs{H} = (k + 1)d_{k + 1}(G)$. On the other hand, each $k$-element dominating $U$ corresponds to $n - k$ choices of $(U, W) \in H$ by adding any vertex not in $U$ to $U$. For $S \neq \emptyset$ and any $U \in E_k^S$, $W = U \cup \{v\} \in d_{k + 1}(G)$ if and only if $v$ dominate $S$ and $v \notin U$. As $U$ cannot contain any vertex that dominate $S$ by definition, we conclude that each $U \in E_k^S$ corresponds to exactly $D(S)$ pairs of $(U, W)\in H$. Thus
$$(k + 1)d_{k + 1}(G) = \abs{H} = (n - k)d_k(G) + \sum_{T \subset V, T\neq \emptyset} E_k^T D(T).$$
Rearranging gives the desired identity.
\end{proof}
We now show \cref{thm:mode}.
\begin{proof}[Proof of \cref{thm:mode}]
First we show that $d_k(G) \geq d_{k + 1}(G)$. It suffices to upper bound
$$\sum_{T \subset V, T\neq \emptyset} E_k^T D(T).$$
We split all possible $T$ into two subsets. Let $B$ be the set of vertices $v\in V$ with $N(v) > \alpha$. For $v \in B$, let $\cT_v$ be the family of nonempty vertex sets $T$ that contains $v$. Let $\cT_0$ be the family of nonempty vertex sets $T$ that does not contain any vertex in $B$. Then
$$\sum_{T \subset V, T\neq \emptyset} E_k^T D(T) \leq \sum_{v \in B} \sum_{T \in \cT_v} E_k^T D(T) + \sum_{T \in \cT_0} E_k^T D(T).$$
For each $v \in B$, we have
$$\sum_{T \in \cT_v} E_k^T D(T) \leq n \sum_{T \in \cT_v} E_k^T.$$
The sum
$$\sum_{T \in \cT_v} E_k^T$$
counts the number of $k$-element vertex sets $S$ that does not dominate $v$, so 
$$\sum_{T \in \cT_v} E_k^T = \binom{n - N(v)}{k} \leq \binom{n - \alpha - 1}{k}.$$
Therefore we have
$$\sum_{v \in B} \sum_{T \in \cT_v} E_k^T D(T) \leq n^2 \cdot\binom{n - \alpha - 1}{k}.$$
On the other hand, for each $T \in \cT_0$ we have $D(T) \leq \alpha$, and by definition any set not dominating $T$ must not contain any universal vertex. Therefore
$$\sum_{T \in \cT_0} E_k^T D(T) \leq \alpha\sum_{T \in \cT_0} E_k^T \leq \alpha\cdot \binom{n - h}{k}.$$
We sum the two estimates to get
$$ \sum_{T \subset V, T\neq \emptyset} E_k^T D(T) \leq n^2 \cdot \binom{n - \alpha - 1}{k} + \alpha \cdot \binom{n - h}{k} < (2k + 1 - n) d_k(G).$$
Thus \cref{lem:eq1} implies $d_{k}(G) \leq d_{k + 1}(G).$

For any $\ell \geq k$, we show $d_{\ell}(G) \geq d_{\ell + 1}(G)$. It suffices to show that
$$d_\ell(G) > \frac{1}{2\ell + 1 - n} \cdot \left(n^2 \cdot \binom{n - \alpha - 1}{\ell} + \alpha \cdot \binom{n - h}{\ell}\right)$$
which is equivalent to
$$\frac{d_{\ell}(G)}{\binom{n}{\ell}} > \frac{1}{2\ell + 1 - n} \cdot \left(n^2 \cdot \frac{\binom{n - \alpha - 1}{\ell}}{\binom{n}{\ell}} + \alpha \cdot \frac{\binom{n - h}{\ell}}{\binom{n}{\ell}}\right).$$
The left hand side is non-decreasing in $\ell$ by \cref{lem:BB}, while the right hand side is decreasing in $\ell$. As the inequality holds for $\ell = k$, it holds for any $\ell \geq k$. 
\end{proof}
Now we turn to the concavity theorem, which is proved using another double counting argument. For vertex sets $T \subset S\subset V$, let $D(S:T)$ be the number of vertices that dominates $S - T$ but does not dominate any vertex in $T$.
\begin{lemma}
For any nonempty vertex set $T$, we have
$$(k + 1)(E_{k + 1}^T - E_k^T) = \sum_{S \subset V, S \supsetneq T} E_k^S D(S:T) - (2k + 1 + N(T) - n)E_k^T.$$
\end{lemma}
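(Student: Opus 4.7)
The plan is to mirror the double-counting argument of \cref{lem:eq1}, this time using the family $E_{k+1}^T$ in place of the family of $(k+1)$-element dominating sets. Define
$$H_T = \{(U, W) : U \subset W,\ \abs{U} = k,\ \abs{W} = k + 1,\ W \in E_{k+1}^T\}.$$
Counting $H_T$ by $W$ gives $\abs{H_T} = (k+1) E_{k+1}^T$ immediately, since each $(k+1)$-element $W \in E_{k+1}^T$ has exactly $k+1$ subsets of size $k$.

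For the count by $U$, I would fix a $k$-set $U$ and determine which $v \notin U$ make $W = U \cup \{v\}$ lie in $E_{k+1}^T$. The constraint that $W$ dominates no vertex of $T$ is equivalent to $W \cap N(T) = \emptyset$, which forces both $U \cap N(T) = \emptyset$ and $v \notin N(T)$. I would then split into two cases according to whether $U$ already dominates $V - T$. If $U \in E_k^T$, then $W$ automatically dominates $V - T$, and the valid $v$ are exactly those in $V \setminus (U \cup N(T))$; since $U \subset V \setminus N(T)$ this gives $n - k - N(T)$ choices, contributing $(n - k - N(T))\, E_k^T$. Otherwise $U$ fails to dominate some nonempty $S' \subset V - T$, and setting $S = S' \cup T$ gives $S \supsetneq T$ with $U \in E_k^S$; the admissible $v$ are precisely those that dominate $S - T$ while not dominating any vertex of $T$, which is the quantity $D(S:T)$ by definition.

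Summing the two cases gives
$$(k+1)\, E_{k+1}^T = (n - k - N(T))\, E_k^T + \sum_{S \supsetneq T} E_k^S\, D(S:T),$$
and subtracting $(k+1)\, E_k^T$ from both sides regroups the $E_k^T$ terms into the coefficient $-(2k + 1 + N(T) - n)$, producing the claimed identity. The step that needs the most care — and the main obstacle I would anticipate — is justifying that the second case is a clean bijection: for each $S \supsetneq T$ and each $U \in E_k^S$, the recovered set $(V - T) \setminus N(U)$ is exactly $S - T$, and each $v$ counted by $D(S:T)$ is automatically outside $U$ because no vertex of $U$ dominates any element of $S$, let alone the nonempty $S - T$. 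Once this matching is pinned down, the rest is bookkeeping.
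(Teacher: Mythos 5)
Your proposal is correct and follows essentially the same double-counting argument as the paper: count pairs $(U,W)$ with $W \in E_{k+1}^T$ once by $W$ and once by $U$, splitting the latter count according to whether $U \in E_k^T$ or $U \in E_k^S$ for some $S \supsetneq T$. The points you flag as needing care (that $U \cap N(T) = \emptyset$ is forced, that the undominated part of $V-T$ recovers $S$, and that the $v$'s counted by $D(S:T)$ automatically avoid $U$) are exactly the observations the paper relies on, so there is no gap.
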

\begin{proof}
We apply double counting to the set $H = \{(U, W): U \subset W, \abs{U} = k, W \in E_{k + 1}^T\}$. On one hand, each $W \in E_{k + 1}^T$ corresponds to exactly $k + 1$ choices of $(U,W) \in H$, so we have $\abs{H} = (k + 1)d_{k + 1}(G)$. On the other hand, each $U \in E_{k}^T$ corresponds to $n - k - N(T)$ choices of $(U, W) \in H$ by adding any vertex not in $U \cup N(T)$ to $U$, where the union is disjoint as $U \in E_k^T$. For any $S \supsetneq T$ and any $U \in E_k^S$, $W = U \cup \{v\} \in E_k^T$ if and only if $v$ dominates $S - T$ and $v \notin U \cup N(T)$. As $U$ cannot contain any vertex in the neighbor of $S$ by definition, we conclude that each $U \in E_k^S$ corresponds to exactly $D(S:T)$ pairs of $(U, W)\in H$. Thus
$$(k + 1)E_{k + 1}^T = \abs{H} = (n - k - N(T))E_k^T + \sum_{S \subset V, S \supsetneq T} E_k^S D(S:T).$$
Rearranging gives the desired identity.
\end{proof}
The lemma implies the following inequality.
\begin{lemma}
\label{lem:eq2}
For any $k \in \left[n/2, n/2 + \sqrt{n} / 4\right]$ we have
$$2d_{k + 1}(G) - d_{k}(G) - d_{k + 2}(G) \geq  \frac{1}{k + 1} d_k(G) - \frac{1}{(k + 1)^2}\sum_{S \subset V, S \neq \emptyset} E_k^S D'(S)$$
where $D'(S)$ is the cardinality of the set
$$\{(u_1, u_2): u_1, u_2 \in V, \{u_1\}, \{u_2\}\text{ does not dominate }S\text{ but }\{u_1, u_2\}\text{ does}\}.$$
\end{lemma}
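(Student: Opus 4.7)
My plan is to apply the preceding lemma to each nonempty $T$, multiply both sides by $D(T)$, and sum over nonempty $T$, so as to convert the statement about $E_{k+1}^T - E_k^T$ into a relation among $d_k, d_{k+1}, d_{k+2}$ and $\sum_S E_k^S D'(S)$. The key combinatorial input is the identity
\[ \sum_{T \subsetneq S,\, T \neq \emptyset} D(T) D(S:T) = D'(S) + D(S)(N(S) - D(S)), \]
valid for every nonempty $S \subset V$. I would prove this by noting that for each vertex $u_2 \in V$ the constraint $u_2 \in D(S:T)$ uniquely determines $T = S \setminus N(u_2)$, and this $T$ is a proper nonempty subset of $S$ precisely when $u_2$ dominates some but not all of $S$ (giving exactly $N(S) - D(S)$ valid $u_2$); for each such $u_2$ the condition $u_1 \in D(T)$ is equivalent to $\{u_1, u_2\}$ dominating $S$, and partitioning by whether $u_1$ alone dominates $S$ produces the two summands $D(S)(N(S) - D(S))$ and $D'(S)$.

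Plugging this identity into the weighted sum of the preceding lemma, using Lemma~\ref{lem:eq1} at $j = k$ and $j = k+1$ to rewrite $\sum_T E_j^T D(T) = (j+1)d_{j+1} - (n-j)d_j$, and observing that the two $N(T)$-weighted sums combine into $-\sum_T E_k^T D(T)^2$ via $D(T)(N(T) - D(T)) - N(T) D(T) = -D(T)^2$, routine bookkeeping yields the identity
\[ \sum_S E_k^S D'(S) = (k+1)(k+2) d_{k+2} - (k+1)(2n - 2k - 1) d_{k+1} + (n-k)^2 d_k + \sum_T E_k^T D(T)^2. \]
Substituting this back into the claimed inequality and invoking $\sum_T E_k^T D(T)^2 \geq 0$, the lemma reduces to showing
\[ (k+1) d_{k+2} + (k+1)(4k+3-2n) d_{k+1} + \left[(n-k)^2 - (k+1)(k+2)\right] d_k \geq 0. \]

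For $k \geq n/2$ the coefficient $4k+3-2n$ is at least $3$, so Lemma~\ref{lem:BB} (which yields $d_{k+1} \geq \frac{n-k}{k+1} d_k$ and $d_{k+2} \geq \frac{(n-k)(n-k-1)}{(k+1)(k+2)} d_k$) further reduces the inequality to the polynomial statement $-m^2 + (2k+5) m - (k+2)^2 \geq 0$ with $m = n - k$. The roots of this downward-opening quadratic are $\frac{(2k+5) \pm \sqrt{4k+9}}{2}$, and a direct check shows that $m = n - k$ lies between them for every $k \in [n/2,\, n/2 + \sqrt{n}/4]$ once $n$ exceeds a small absolute constant, which is easily guaranteed by the main theorem's hypothesis $n \geq 2^{13}$. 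The main obstacle is the combinatorial identity in the first paragraph: the boundary cases $T = \emptyset$, $T = S$, and $u_1 \in D(S)$ must each be handled carefully so that only the two advertised summands survive in the final count.
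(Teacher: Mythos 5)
Your proposal is correct and follows essentially the same route as the paper: both hinge on multiplying the $E_{k+1}^T - E_k^T$ identity by $D(T)$, summing, swapping the order of summation, and invoking the same combinatorial identity $\sum_{T \subsetneq S,\,T\neq\emptyset} D(T)D(S:T) = D'(S) + D(S)(N(S)-D(S))$, with Lemma~\ref{lem:BB} absorbing the leftover $d_k, d_{k+1}, d_{k+2}$ terms. Your bookkeeping differs only cosmetically (you derive an exact closed form for $\sum_S E_k^S D'(S)$ and verify a quadratic in $m = n-k$, where the paper drops nonnegative terms earlier), and your explicit "small absolute constant" caveat on $n$ is also implicitly present in the paper's own inequality $\frac{2k+3-n}{k+2} \geq \frac{2k+2-n}{n-k}$, so it is not a defect relative to the original.
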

\begin{proof}
By \cref{lem:eq1} we have the identities
$$d_{k + 1}(G) - d_k(G) = \frac{1}{k + 1} \left(\sum_{T \subset V, T\neq \emptyset} E_k^T D(T) - (2k + 1 - n)d_k(G)\right),$$
$$d_{k + 2}(G) - d_{k + 1}(G) = \frac{1}{k + 2} \left(\sum_{T \subset V, T\neq \emptyset} E_{k + 1}^T D(T) - (2k + 3 - n)d_{k + 1}(G)\right).$$
Therefore we have
\begin{align}
&2d_{k+1}(G) - d_{k + 2}(G) - d_{k}(G) \notag\\
    =&\left(\frac{1}{k + 1} \left(\sum_{T \subset V, T\neq \emptyset} E_k^T D(T) \right) - \frac{1}{k + 2} \left(\sum_{T \subset V, T\neq \emptyset} E_{k + 1}^T D(T)\right)\right) \notag\\
    +& \left(\frac{2k + 3 - n}{k + 2}d_{k + 1}(G) - \frac{2k + 1 - n}{k + 1} d_k(G)\right). \label{eq:lem2eq1}
\end{align}
By \cref{lem:BB}, we have
$$d_{k + 1}(G) \geq \frac{n - k}{k + 1}d_k(G),$$
and since $k \in \left[n/2, n/2 + \sqrt{n} / 4\right]$, we also have
$$\frac{2k + 3 - n}{k + 2} \geq \frac{2k + 2 - n}{n - k}.$$
Therefore we conclude
$$\frac{2k + 3 - n}{k + 2}d_{k + 1}(G) - \frac{2k + 1 - n}{k + 1} d_k(G) \geq \frac{1}{k + 1}d_k(G).$$
It remains to estimate the first term in \eqref{eq:lem2eq1}. We first note that
\begin{align*}
 & \frac{1}{k + 1} \left(\sum_{T \subset V, T\neq \emptyset} E_k^T D(T) \right) - \frac{1}{k + 2} \left(\sum_{T \subset V, T\neq \emptyset} E_{k + 1}^T D(T)\right) \\
 \geq & \frac{1}{k + 1} \sum_{T \subset V, T\neq \emptyset} (E_k^T - E_{k + 1}^T) D(T).
\end{align*}
Now we apply \cref{lem:eq2}, and obtain
\begin{align*}
    &\sum_{T \subset V, T\neq \emptyset} (E_k^T - E_{k + 1}^T) D(T) \\
    =& \frac{1}{k + 1} \sum_{T \subset V, T\neq \emptyset} \left( (2k + 1 + N(T) - n)E_k^T - \sum_{S \subset V, S \supsetneq T} E_k^S D(S:T)\right) D(T) \\
    =& \frac{1}{k + 1} \sum_{S \subset V, S\neq \emptyset} \left((2k + 1 + N(S) - n)D(S) - \sum_{T \subsetneq S, T \neq \emptyset} D(S:T)D(T)\right) E_k^S.
\end{align*}
Now fix any $S \subset V$. By definition of $D(S:T)$ and $D(T)$, the sum
$$\sum_{T \subsetneq S, T \neq \emptyset} D(S:T)D(T)$$
is equal to the cardinality of the set
$$H_1 = \{(T, u_1, u_2): T \subsetneq S, T \neq \emptyset, \{u_1\} \text{ dominates }S - T\text{ but is not in the neighbor of }T, \{u_2\}\text{ dominates }T\}.$$
We note that $T = S\backslash N(u_1)$. Therefore the cardinality of $H_1$ is the same as the cardinality of
$$H_2 = \{(u_1, u_2): u_1 \text{ is in the neighbor of }S\text{ but does not dominate }S, \{u_1, u_2\}\text{ dominates }S\}.$$
We can partition $H_2$ based on whether $u_2$ dominates $S$. If $u_2$ dominates $S$, then $u_1$ can be any vertex in the neighbor of $S$ but not dominating $S$, so the number of such elements in $H_2$ is $(N(S) - D(S))D(S)$. On the other hand, the number of elements with $u_2$ not dominating $S$ is $D'(S)$ be definition. Therefore we get
$$\sum_{T \subsetneq S, T \neq \emptyset} D(S:T)D(T) = D(S)N(S) - D(S)^2 + D'(S).$$
So we conclude that
\begin{align*}
    &\sum_{T \subset V, T\neq \emptyset} (E_k^T - E_{k + 1}^T) D(T) \\
    =& \frac{1}{k + 1} \sum_{S \subset V, S\neq \emptyset} \left((2k + 1 - n)D(S) + D(S)^2 - D'(S)\right) E_k^S \geq -\frac{1}{k+1}\sum_{S \subset V, S\neq \emptyset} D'(S)E_k^S.
\end{align*}
Combining the estimates on the two term of \eqref{eq:lem2eq1} gives the desired result.
\end{proof}
We are now ready to show our concavity theorem.
\begin{proof}[Proof of \cref{thm:unimodality}]
The proof is the same as the proof of \cref{thm:mode}. We use the same definition of $B$, $\cT_v$ and $\cT_0$ as in the proof of \cref{thm:mode}. Then
$$\sum_{S \in \cT_v} E_k^S D'(S) \leq n^2 \sum_{S \in \cT_v} E_k^S \leq n^2 \cdot \binom{n - \alpha - 1}{k}.$$
For any $S \subset \cT_0$, recall that $D'(S)$ is the cardinality of the set
$$\{(u_1, u_2): u_1, u_2 \in V, \{u_1\}, \{u_2\}\text{ does not dominate }S\text{ but }\{u_1, u_2\}\text{ does}\}.$$
Fix any $s_1 \in S$. Either $u_1$ or $u_2$ must dominate $s_1$. If $u_1$ dominate $s_1$, then since $N(s_1) \leq \alpha$, there are at most $\alpha$ choices of $u_1$. Fixing $u_1$, there must be some $s_2 \in S$ not dominated by $u_1$. So $u_2$ must be chosen to dominate $s_2$, so there are at most $\alpha$ choices of $u_2$ given a choice for $u_1$. Thus there are at most $\alpha^2$ choices of $(u_1, u_2)$ given that $u_1$ dominate $s_1$. Symmetrically, there are at most $\alpha^2$ choices of $(u_1, u_2)$ given that $u_2$ dominate $s_1$. So we conclude that $D'(S) \leq 2\alpha^2$. Therefore
$$\sum_{S \in \cT_0} E_k^S D'(S) \leq 2\alpha^2\sum_{S \in \cT_0} E_k^S \leq 2\alpha^2 \cdot \binom{n - h}{k}.$$
Combining the two estimates, we conclude that 
$$\sum_{S \subset V, S \neq \emptyset} E_k^S D'(S) \leq \sum_{v \in V} \sum_{S \subset \cT_v}  E_k^S D'(S) + \sum_{S \subset \cT_0}  E_k^S D'(S) \leq n^3 \cdot \binom{n - \alpha - 1}{k} + 2\alpha^2 \cdot \binom{n - h}{k}.$$
By \cref{lem:eq2}, we conclude that $2d_{k + 1}(G) > d_{k}(G) + d_{k + 2}(G)$. Analogous to the proof of \cref{thm:mode}, for any $\ell \geq k$ we have
$$d_\ell(G) > \frac{1}{\ell + 1} \cdot \left(n^3 \cdot \binom{n - \alpha - 1}{\ell} + 2\alpha^2 \cdot \binom{n - h}{\ell}\right)$$
so we have $2d_{\ell + 1}(G) > d_{\ell}(G) + d_{\ell + 2}(G)$ for any $\ell \in \left[k, n/2 + \sqrt{n} / 4\right]$.
\end{proof}
\section{graphs with a universal vertex}
\label{sec:universal}
We are now ready to establish our result on graphs with a universal vertex. 
\subsection{Proof of \cref{thm:main}}
Let $G = (V,E)$ be any graph on $n \geq 2^{13}$ vertices with a universal vertex $v$. Then any vertex set containing $v$ is a dominating set of $G$, so we have
$$d_k(G) \geq \binom{n - 1}{k - 1}.$$
In \cite[Proposition 2.4]{BB20}, Beaton and Brown showed that $d_i(G) \leq d_{i + 1}(G)$ for any $i < n / 2$, so the mode of $D(G, x)$ is at least $n / 2$. In the other direction, we apply \cref{thm:mode} with $k = \ceil{n/2 + \log_2 n} + 1$ and $\alpha = \floor{2\log_2 n}$. Then we have
$$n^2 \cdot \binom{n - \alpha - 1}{k} \leq n^2 \cdot 2^{-\alpha - 1}\binom{n}{k} < \binom{n}{k} < 2\binom{n - 1}{k - 1}.$$
and
$$\alpha \cdot \binom{n - h}{k} < (2k  - 1 - n) \binom{n - 1}{k} < (2k - 1 - n)\binom{n - 1}{k - 1}.$$
Thus we have
$$\frac{1}{2k + 1 - n} \cdot \left(n^2 \cdot \binom{n - \alpha - 1}{k} + \alpha \cdot \binom{n - h}{k}\right) < \binom{n - 1}{k - 1} \leq d_k(G).$$
So \cref{thm:mode} implies $d_\ell(G) > d_{\ell + 1}(G)$ for any $\ell \geq \ceil{n/2 + \log_2 n} + 1$. Thus the mode of $D(G, x)$ is in the interval $[n/2, n/2 + \log_2 n + 2]$.

To finish the proof, we establish the unimodality of $D(G, x)$. It suffices to show that for any $\ell \in [n/2, n/2 + \log_2 n + 2]$ we have $2d_{\ell + 1}(G) > d_\ell(G) + d_{\ell + 2}(G)$. We use \cref{thm:unimodality} with $k = \ceil{n/2}$ and $\alpha = \floor{3\log_2 n}$. Then we have
$$\binom{n - \alpha - 1}{k} < 2^{-\alpha-1} \binom{n}{k} < n^{-3}\binom{n}{k} < 2n^{-3}\binom{n - 1}{k - 1}$$
and
$$2\alpha^2 \cdot \binom{n - h}{k} \leq 2\alpha^2  \cdot \binom{n - 1}{k - 1} \leq 18\log_2^2 n \binom{n - 1}{k - 1} < (k - 1)\binom{n - 1}{k - 1}.$$
So we conclude that
$$\frac{1}{k + 1} \cdot \left(n^3 \cdot \binom{n - \alpha - 1}{k} + 2\alpha^2 \cdot \binom{n - h}{k}\right) < \binom{n - 1}{k - 1} \leq d_{k}(G).$$
By \cref{thm:unimodality}, for any $\ell \in [n/2, n/2 + \sqrt{n} / 4]$ we have $2d_{\ell + 1}(G) > d_\ell(G) + d_{\ell + 2}(G)$. As $\sqrt{n} / 4 > \log_2 n + 2$, we conclude that $D(G, x)$ is unimodal.

\subsection{Proof of \cref{thm:cons}} We take any regular graph $G_0 = (V_0, E_0)$ on $n - 1$ vertices with degree $d = 2\floor{\log_2 (4n) / 2} + 2$ and girth at least $5$, then add a universal vertex $u$ to form the graph $G$. By \cref{lem:eq2}, it suffices to show that for any $k \in [n/2, n/2 + \frac{1}{999}\log_2 n]$ we have
$$\sum_{T \subset V_0, T\neq \emptyset} E_k^T D(T) - (2k + 1 - n)d_k(G) > 0.$$
On one hand, we have the trivial bound
$$d_k (G) \leq \binom{n}{k}.$$
On the other hand, note that
$$\sum_{T \subset V, T\neq \emptyset} E_k^T D(T) \geq \sum_{v \in V_0} E_k^{\{v\}} D(\{v\}).$$
For any $v \in V_0$, a $k$-element vertex subset of $V_0$ does not dominant $v$ if and only if it does not contain any vertex in $N(v)$, so the number of such sets is
$$\binom{n - d - 1}{k}.$$
On the other hand, if a $k$-element vertex subset of $V_0$ does not dominate $v$ and is not in $E_k^T$, then it must not dominate another vertex $w \in V_0$, so does not contain any vertex in $N(\{v, w\})$. As $G$ has girth at least $5$, we have $N(\{v, w\}) \geq 2d$. So the number of such sets is at most
$$\sum_{w \in V_0, w\neq v}\binom{ n - N(\{v, w\})}{k} \leq (n-2)\binom{n - 2d}{k}.$$
Thus we conclude that
$$\abs{E_k^{\{v\}}} \geq \binom{n - d - 1}{k} - (n-2)\binom{n - 2d}{k} \geq (1 - (n - 2)2^{-d + 1})\binom{n - d - 1}{k} \geq \frac{1}{2}\binom{n - d - 1}{k}.$$
Therefore
$$\sum_{v \in V_0} E_k^{\{v\}} D(\{v\}) \geq \frac{n - 1}{2}\cdot \binom{n - d - 1}{k}\cdot (d + 1).$$
Now we note that
$$\binom{n - d - 1}{k} = \binom{n}{k} \cdot \frac{(n - k - 1)(n - k - 2) \cdots (n - k - d - 1)}{n \cdot (n - 1) \cdots (n - d)} \geq \binom{n}{k} \cdot \left(\frac{n - k - d - 1}{n - d}\right)^{d + 1}.$$
We note that
$$\frac{n - k - d - 1}{n - d} \geq \frac{1}{2} - \frac{d}{n}.$$
So we have
$$\binom{n - d - 1}{k} \geq \binom{n}{k}  \cdot \frac{1}{2^{d + 2}} \geq \binom{n}{k} \cdot \frac{1}{64n}.$$
Thus we obtain the desired inequality
$$\sum_{v \in V_0} E_k^{\{v\}} D(\{v\}) \geq \frac{n - 1}{2}\cdot (d + 1) \cdot \frac{1}{64n}\binom{n}{k} > (2k + 1 - n)\binom{n}{k} \geq (2k + 1 - n) d_k(G).$$
We conclude that the graph $G$ satisfies the conditions of \cref{thm:cons}.
\section{Concluding remarks}
The technique developed in \cref{sec:mode-concave} is useful for showing unimodality of the domination polynomial for graphs with high degree vertices. However, it sheds no light to the case when the maximum degree of $G$ is small, for example when $G$ is a cubic graph or a tree. New techniques might be needed to deal with such $G$. A possible direction was suggested in \cite{BB21}, where the authors proposed the following conjecture.
\begin{conjecture}
For any tree T and a leaf $v \in V(T)$, if both $D(T)$ and $D(T\backslash v)$ are
unimodal, then their modes are of distance at most 1.
\end{conjecture}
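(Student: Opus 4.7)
The plan is to use a leaf-removal recursion for the domination polynomial. Let $w$ be the unique neighbor of $v$ in $T$, set $T' = T \setminus v$, and let $F = T' - w$ be the forest obtained by further deleting $w$. Partitioning each $k$-element dominating set $S$ of $T$ according to whether $v \in S$ yields
\[
    d_k(T) = A_k(T') + C_{k-1}(T'),
\]
where $A_k(T')$ counts $k$-subsets of $V(T')$ that contain $w$ and dominate $T'$, and $C_j(T')$ counts $j$-subsets $S' \subseteq V(T')$ with $V(T') \setminus \{w\} \subseteq N_{T'}[S']$ (once $v$ is added, $w$ is automatically covered). Observing that a subset of $V(T') \setminus \{w\}$ dominates $V(T') \setminus \{w\}$ in $T'$ if and only if it dominates $F$, this rewrites as
\[
    D(T, x) = (1+x)\alpha(T', x) + x D(F, x),
\]
with $\alpha(T', x) = \sum_k A_k(T') x^k$, while the parallel decomposition of $D(T', x)$ gives $D(T', x) = \alpha(T', x) + \beta(T', x)$, where $\beta$ enumerates dominating sets of $T'$ that avoid $w$ and satisfies $\beta \leq D(F, x)$ coefficient-wise.

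The next step is to show that the modes of $\alpha(T', x)$ and $D(F, x)$ each lie within $1$ of the mode $m'$ of $D(T', x)$. Granting this, $(1+x)\alpha(T', x)$ has mode in $\{m'-1, m', m'+1\}$ and $x D(F, x)$ has mode in $\{m', m'+1, m'+2\}$, so the assumed unimodality of $D(T, x)$, combined with $D(T', x) = \alpha + \beta$ and $\beta \leq D(F)$, should pin the mode $m$ of $D(T, x)$ in $\{m'-1, m', m'+1\}$. To bound the mode of $\alpha(T', x)$, I would specialize the double-counting identity \cref{lem:eq1} to the ``$w$-anchored'' domination problem on $T'$ where $w$ is forced into every counted set; to bound the mode of $D(F, x)$, one can either invoke the same identity on $F$ directly or argue inductively that deleting a single vertex shifts the mode of the domination polynomial by at most $1$.

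The main obstacle is precisely this mode-control step. The proof of \cref{thm:mode} relied on a high-degree vertex to swamp the error term $\binom{n - \alpha - 1}{k}$, but a generic tree has no universal vertex and may even have maximum degree $3$, so that route is unavailable. A tree-specific substitute is needed, for instance a concentration estimate for $d_k(T)$ in terms of the leaf count of $T$, or a delicate simultaneous structural induction that controls the modes of $D(T')$, $\alpha(T')$, and $D(F)$ together. Without such an ingredient, the recursion above pins $|m - m'|$ down only to a constant rather than to $1$, which is where I expect the genuine difficulty to lie.
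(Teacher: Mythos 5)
The statement you are addressing is not a result proved in this paper: it appears in the concluding remarks as an open conjecture, attributed to Burcroff and O'Brien \cite{BB21}, which the author quotes precisely as a direction that the paper's techniques do \emph{not} reach. There is therefore no proof in the paper to compare against, and your proposal does not supply one either. Your leaf-removal recursion is correct as bookkeeping: with $T' = T\setminus v$, $w$ the neighbour of $v$, and $F = T' - w$, one does obtain $D(T,x) = (1+x)\alpha(T',x) + xD(F,x)$ and $D(T',x) = \alpha(T',x) + \beta(T',x)$ with $\beta \leq D(F,x)$ coefficientwise. But the argument stalls exactly where you say it does, and in fact slightly earlier. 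First, you have no argument that the modes of $\alpha(T',x)$ and $D(F,x)$ lie within $1$ of the mode of $D(T',x)$; these auxiliary polynomials need not be unimodal at all, $F$ is a forest not covered by the conjecture's hypotheses, and "anchoring $w$" changes the counting problem in a way that \cref{lem:eq1} does not control without a high-degree vertex. Second, even granting that step, summing unimodal polynomials whose modes are spread over a window of width three only localizes the mode of $D(T,x)$ to that window, giving $|m - m'|\leq 2$ at best rather than $\leq 1$; extracting the last unit of precision is the entire content of the conjecture.

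Your diagnosis of why the paper's machinery fails here is accurate: \cref{thm:mode} and \cref{thm:unimodality} need $d_k(G)$ to dominate an error term of the shape $n^2\binom{n-\alpha-1}{k}$, which requires a vertex of degree roughly $\log n$ or more, and a path or a bounded-degree tree defeats that mechanism entirely --- this is precisely the limitation the concluding remarks acknowledge. So treat your write-up as a reasonable plan of attack on an open problem, with its central step (mode control for $\alpha(T',x)$ and $D(F,x)$, plus the final width-one localization) still missing, rather than as a proof.
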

It might also be interesting to get rid of the large size constraint $n \geq 2^{13}$ on the graph $G$.
\section*{Acknowledgements}
The author's research is self-funded. The author thanks I. Beaton for communicating the problem at CanaDAM 2021. 

\end{document}